\theoremstyle{plain}
\newtheorem{thm}{Theorem}
\newtheorem{lem}{Lemma}
\theoremstyle{definition}
\newtheorem*{note}{Notation}
\crefname{thm}{theorem}{theorems}
\crefname{lem}{lemma}{lemmas}
\newcommand{\mb}{\mathbb}
\newcommand{\mc}{\mathcal}
\renewcommand{\mod}{\operatorname{mod}}
\def \a{\alpha} \def \b{\beta} \def \d{\delta} \def \e{\varepsilon} \def \g{\gamma}  \def \l{\lambda}  \def \t{\theta} 
\numberwithin{equation}{section}
\renewcommand{\labelenumi}{\setlength{\labelwidth}{\leftmargin}
   \addtolength{\labelwidth}{-\labelsep}
   \hbox to \labelwidth{\theenumi.\hfill}}
\begin{document}
\title{Primes in arithmetic progressions to spaced moduli. III}
\author{Roger Baker}
\address{Department of Mathematics\\
Brigham Young University\\
Provo, UT 84602, U.S.A\\
baker@math.byu.edu}

 \begin{abstract}
Let
 \[E(x,q) = \max_{(a,q)\, =\, 1} \Bigg|
 \sum_{\substack{n\, \le\, x\\
 n\, \equiv\, a\, (\mod q)}} \Lambda(n)
 - \frac x{\phi(q)}\Bigg|.\]
We show that, for $S$ the set of squares,
 \[\sum_{\substack{q\, \in\, S\\
 Q\, <\, q\, \le\, 2Q}} E(x, q) \ll_{A,\e} x\, 
 Q^{-1/2}(\log x)^{-A} \]
for $\e > 0$, $A > 0$, and $Q \le x^{1/2-\e}$. This improves a theorem of the author.
 \end{abstract}
 
\keywords{Primes in arithmetic progressions, large sieve for squares}

\subjclass[2010]{Primary 11N13}
\maketitle

\section{Introduction} Let
 \[
 E(x,q) = \max_{(a,q)=1} \Bigg|\sum_{\substack{
 n \le x\\
 n\, \equiv\, a (\mod q)}} \Lambda(n) - 
 \frac x{\phi(q)}\Bigg|,
 \]
where $\Lambda$ is the von Mangoldt function. Let
 \[S_f = \{f(k) : k \in \mb N\},\]
where $f$ is a polynomial of degree $d \ge 2$ with integer coefficients and positive leading coefficient. In analogy with the Bombieri-Vinogradov theorem, we would like to show that
 \begin{equation}\label{eq1.1}
\sum_{\substack{q\, \in\, S_f\\
 Q\, <\, q\, \le\, 2Q}} E(x, q) \ll_{A,\e} x\, Q^{1/d-1}(\log x)^{-A} 
 \end{equation}
for $\e > 0$, $A > 0$ and $Q \le x^{1/2-\e}$. In the general case, \eqref{eq1.1} is known only for $Q \le x^{9/20-\e}$, and in the special case $f(X) = X^2$, for $Q \le x^{43/90 - \e}$ \cite{rcb}.

Here we refine the approach in \cite{rcb} for $f(X) = X^2$.

 \begin{thm}\label{thm1}
Let $f(X) = X^2$. Then \eqref{eq1.1} holds for $Q \le x^{1/2-\e}$.
 \end{thm}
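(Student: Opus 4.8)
The plan is to run the usual Bombieri--Vinogradov machine --- Heath--Brown's identity to produce bilinear (Type~I and Type~II) sums, then the large sieve --- but with the large sieve for \emph{square} moduli replacing the classical one, and exploiting that modulo $m^2$ the relevant complete sums are Kloosterman sums, so that Weil's bound is available for the Type~I part.

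Reduce first: write $q=m^2$ with $M:=\sqrt Q<m\le\sqrt{2Q}$, so $M\le x^{1/4-\e/2}$ and there are only $\asymp M$ moduli. With $\Delta(x;q,a):=\sum_{n\le x,\ n\equiv a\,(q)}\Lambda(n)-x/\phi(q)$, it suffices to prove $\sum_{M<m\le\sqrt{2Q}}\max_{(a,m)=1}|\Delta(x;m^2,a)|\ll_A xM^{-1}(\log x)^{-A}$. Here the pointwise Siegel--Walfisz bound is far too weak, since $M$ can be as large as $x^{1/4-\e/2}$, so bilinear input is forced (though if $Q\le(\log x)^{B}$ the claim is immediate from Siegel--Walfisz, so we may assume $Q>(\log x)^{B}$). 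Prime powers $p^{\nu}$, $\nu\ge2$, contribute $O(Mx^{1/2}\log x)$, which is admissible, and are discarded.

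Now apply the Heath--Brown identity of level $k$ (with $z=x^{1/k}$, $k$ chosen at the end) to express $\Lambda(n)$, $n\le x$, as $O_k((\log x)^{k})$ sums $\sum_{n_1\cdots n_{2j}=n}\mu(n_1)\cdots\mu(n_j)(\log n_{2j})\prod_{i\le j}\mathbf 1_{n_i\le z}$; after a dyadic subdivision the standard combinatorial lemma presents each as $O((\log x)^{O(k)})$ bilinear forms $\sum_{r\sim R}\sum_{s\sim S}\alpha_r\beta_s$ with $rs=n$, $RS\asymp x$, $|\alpha_r|,|\beta_s|\ll_k(rs)^{\e}$, of two shapes: \textbf{Type~I}, where $\beta_s\in\{1,\log s\}$ so $s$ runs over an interval and $R\le x^{\theta_1}$; and \textbf{Type~II}, where $x^{1/k}\le R\le S\le x^{1-1/k}$. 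For Type~I, complete the inner sum over $s\equiv a\bar r\,(m^2)$: it equals $(x/r)/m^2+m^{-2}\sum_{1\le|h|<m^2}e(-ha\bar r/m^2)\sum_{s\le x/r}e(hs/m^2)$ (a partial summation handles $\log s$); subtracting the main term and summing over $r\sim R$ leaves, for each $h$, the Kloosterman-type sum $\sum_{r\sim R}\alpha_r e(-ha\bar r/m^2)$. Completing also in $r$ and invoking $|S(u,v;m^2)|\ll m^{1+\e}(u,v,m^2)^{1/2}$, then summing trivially over $|h|<m^2$ and over $m\sim M$, bounds the Type~I total by $\ll xM^{-1}x^{-\delta}$ provided $R\le x^{1/2-\e}$, which $\theta_1$ respects once $k$ is not too small; I do not expect Type~I to be the obstacle.

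The obstacle is Type~II. Remove $rs\le x$ by a smooth partition (costing log powers), expand $\Delta$ in characters mod $m^2$, apply Cauchy--Schwarz to separate $r$ from $s$, and invoke a large sieve for square moduli,
\[
\sum_{m\le M}\ \sideset{}{^*}\sum_{\chi\bmod m^2}\Big|\sum_{n\sim N}c_n\chi(n)\Big|^2\ \ll_{\e}\ (MN)^{\e}\big(M^3+M^2\sqrt N+N\big)\sum_n|c_n|^2,
\]
together with the classical bound $\ll(M^2+N)(\log x)^{O(1)}\|c\|^2$ for the characters descending to modulus $m$. Inserting $RS\asymp x$ and $M\le x^{1/4-\e/2}$, the terms $M^6$, $M^4(RS)^{1/2}$, $M^2R^{1/2}S$, $RS$ and their analogues are all $\le M^2x^{1-\delta}$; the dangerous contributions are $M^5S^{1/2}$ and $M^3S$, which are under control only when $S$ is not too large --- essentially only for the \emph{balanced} Type~II sums with $R,S\asymp x^{1/2}$. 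So everything reduces to the \emph{unbalanced} Type~II sums with $S$ near $x^{1-1/k}$, where Cauchy--Schwarz is too crude. For these I would use reciprocity $\bar r/m^2\equiv-\overline{m^2}/r\pmod1$ to trade the modulus $m^2$ for the much smaller modulus $r\sim R$ (after which the large sieve is far more forgiving), and/or the isomorphism $(\mathbb Z/m^2\mathbb Z)^{\times}\cong(\mathbb Z/m\mathbb Z)^{\times}\ltimes(\mathbb Z/m\mathbb Z)$, under which a primitive $\chi\bmod m^2$ factors as $\psi(n)\,e(\ell\bar n/m)$ with $\psi\bmod m$ and $(\ell,m)=1$ --- lowering the conductor from $m^2$ to $m$ --- so that the unbalanced sum is governed by the large sieve for characters mod $m$ together with Weil's bound (or Kuznetsov's formula, should an $m$-average survive) for the Kloosterman phase. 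The hard part, and where the improvement over \cite{rcb} must enter, is the bookkeeping that keeps every bilinear form either balanced enough for the square-moduli large sieve or unbalanced-but-short enough for the reciprocity/conductor-lowering argument; carrying this through uniformly and optimising $k$ should yield $Q\le x^{1/2-\e}$.
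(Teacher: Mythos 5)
Your framework --- Heath--Brown's identity, a Type~I/Type~II split, and the large sieve for square moduli --- is the right skeleton, and it is essentially the skeleton of \cite{rcb}. But the theorem lives entirely in the step you defer: you correctly isolate the unbalanced Type~II sums (the terms like $M^3S$ and $M^5S^{1/2}$ after Cauchy--Schwarz and Baier--Zhao) as the obstruction, and then write that reciprocity, a conductor-lowering factorization $\chi(n)=\psi(n)e(\ell\bar n/m)$, or Kuznetsov ``should yield'' the result. That is precisely the point at which \cite{rcb} gets stuck at $Q\le x^{43/90-\e}$; the bookkeeping you describe is what was already done there, and none of the three devices you name is shown (or, as far as I can see, able) to close the remaining range. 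Reciprocity trades the modulus $m^2$ for $r$, but the coefficients $\alpha_r$ in a Type~II sum are arbitrary, so there is no complete sum in $r$ to exploit; and writing a primitive $\chi\bmod m^2$ as a character mod $m$ times a Kloosterman-type phase does not reduce the conductor relevant to the large sieve. So the proposal as written has a genuine gap exactly where the new idea is required.

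What actually closes the gap in the paper is different. One splits the characters $\chi\bmod q^2$ according to the size of the conductor, $C(\chi)\asymp x^{\l}$. For $\l\le(5\t+\e)/6$ the large-values/zero-density machinery of \cite{rcb} (after removing a small exceptional set $F(Q)$ of moduli, \Cref{lem5}) already suffices. For larger $\l$ the key observation is that a character of conductor $gk^2\mid q^2$ ($g$ squarefree) forces $q=gtk$ with $gt^2\asymp x^{\t-\l}$, so such characters are \emph{sparse}; this leads to a new large sieve for the Farey fractions $a/(gk^2)$ (\Cref{lem2}, proved from the Baier--Zhao counting lemma), giving the conductor-restricted bound $x^{\e}(Q^{1/2}x^{\l}+Q^{3/4}Mx^{-\l/2})\|c\|_2^2$ of \Cref{lem3}, which beats the raw Baier--Zhao inequality precisely when $\l$ is close to $\t$. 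Feeding this into Cauchy--Schwarz requires a bilinear factorization with $x^{1/2}\ll H\ll x^{3/5}$, which is extracted combinatorially from a many-fold (15-piece) Heath--Brown decomposition rather than from a two-piece Type~I/Type~II dichotomy. Without some substitute for \Cref{lem2} and this conductor decomposition, your argument does not reach $Q\le x^{1/2-\e}$.
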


To prove \Cref{thm1}, we sharpen the auxiliary results on pp.~147--150 of \cite{rcb}. With a little modification, we are then able to complete the proof of \Cref{thm1} by arguing as in \cite{rcb}. The key new result is \Cref{lem2} below, which strengthens Lemma 11 of \cite{rcb}.  Thanks are due to James Maynard for suggesting in conversation the line of argument used to prove \Cref{lem2}.

 \begin{note}
We write
 \[\|\t\| = \min_{n\in \mb Z} |\t - n|\]
and, for complex numbers $c_1, \ldots, c_N$,
 \[\|c\|_2 = \left(\sum_{n=1}^N |c_n|^2\right)^{1/2}.\]
The $k$-th Riesz mean is defined by
 \begin{equation}\label{eq1.2}
A_k(x, q, a, d) = \frac 1{k!} \sum_{\substack{
\ell\, \le\, x\\
\ell\, \equiv\, a(\mod q)\\
\ell\, \equiv\, 0(\mod d)}} \left(\log\, \frac x\ell\right)^k
\quad (k=0, 1, \ldots)
 \end{equation}
and we write
 \begin{equation}\label{eq1.3}
r_k(x, q, a, d) = A_k(x, q, a, d) - \frac x{qd}. 
 \end{equation}
It is convenient to write $a^{(q)}$ for an arbitrary integer with $(a^{(q)}, q) = 1$.

We suppose, as we may, that $x$ is large and $\e$ is sufficiently small, and write $\d = \e^2$. Except in \Cref{lem5}, implied constants depend at most on $\e$ or, when $A$ appears in the result, on $\e$ and $A$.
 \end{note}

The conductor of a primitive Dirichlet character $\chi$ is denoted by $C(\chi)$.

\section{The large sieve for square moduli}\label{sec2}

 \begin{lem}\label{lem1}
Let $\Delta > 0$ and $Q \ge 1$. For $\b$ real, let $\mc N(\b)$ denote the number of relatively prime pairs $a$, $q$, $1 \le a \le q^2$, $q \le Q$, with
 \[\left\|\frac a{q^2} - \b\right\| \le \Delta.\]
Then
 \[\mc N(\b) \ll (Q\Delta^{-1})^\e(Q^3\Delta + Q^{1/2}).\]
 \end{lem}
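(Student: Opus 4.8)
The plan is to bound $\mc N(\b)$ by controlling, for each modulus $q \le Q$, the number of admissible $a$ with $1 \le a \le q^2$, $(a,q)=1$, for which $\|a/q^2 - \b\| \le \Delta$. Fix $q$. The fractions $a/q^2$ with $1 \le a \le q^2$ are $q^2$ equally spaced points with gap $1/q^2$, so the number of them within distance $\Delta$ of $\b$ is $O(q^2 \Delta + 1)$. Summing the trivial bound $q^2\Delta+1$ over $q \le Q$ gives $O(Q^3\Delta + Q)$, which is weaker than claimed in the second term ($Q$ versus $Q^{1/2}$), so the real work is to show that the contribution from moduli with $q^2\Delta < 1$ — where the count is either $0$ or $1$ — is only $O((Q\Delta^{-1})^\e(Q^3\Delta + Q^{1/2}))$.

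So the main step: consider the set of $q \le Q$ that contribute at least one pair, i.e. for which there exists $a$ with $(a,q)=1$ and $\|a/q^2 - \b\| \le \Delta$. If two distinct moduli $q_1 < q_2$ in $(Q', 2Q']$ both contribute, with corresponding fractions $a_1/q_1^2$ and $a_2/q_2^2$, then either $a_1/q_1^2 = a_2/q_2^2$ as rationals — impossible once $(a_i,q_i)=1$ and $q_1\ne q_2$ unless the fractions are both integers — or
\[
\left\|\frac{a_1}{q_1^2} - \frac{a_2}{q_2^2}\right\| = \left\|\frac{a_1 q_2^2 - a_2 q_1^2}{q_1^2 q_2^2}\right\| \ge \frac{1}{q_1^2 q_2^2} \gg \frac{1}{Q^4},
\]
while the triangle inequality forces this to be $\le 2\Delta$. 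Hence if $\Delta \ll Q^{-4}$ there can be at most one contributing modulus in each such dyadic block, and more generally the contributing moduli in $(Q',2Q']$ are $\ll Q'^4 \Delta + 1$ in number, each contributing $\ll Q'^2\Delta + 1$ pairs. I would sum $(Q'^4\Delta+1)(Q'^2\Delta+1)$ over dyadic $Q' \le Q$; the cross terms give $Q^6\Delta^2$ and $Q^4\Delta$, which are $\ll Q^3\Delta$ in the relevant range $\Delta \le Q^{-3}$ (outside that range the trivial $Q^3\Delta$ bound already dominates $\mc N(\b) \le Q^3$), and the leftover $\sum_{Q'} 1 \ll \log Q$ must be absorbed. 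To get $Q^{1/2}$ rather than $\log Q$ I would instead run the spacing argument more carefully: group the contributing $q$ by the size of the denominator of the rational $\|\b\|$ is being approximated by, or split according to whether $q \le Q^{1/2}$ (trivially $\le Q^{1/2}$ such moduli, each giving $\ll Q\Delta+1$ pairs, total $\ll Q^{3/2}\Delta + Q^{1/2}$) or $q > Q^{1/2}$ (then $q^2 > Q$, and a Farey/spacing argument on the $a_i q_j^2 - a_j q_i^2$ shows few survive). The $Q^\e$ factor then comes from a divisor-type bound when counting pairs $(a_1,q_1),(a_2,q_2)$ with $|a_1 q_2^2 - a_2 q_1^2|$ small.

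The main obstacle I anticipate is replacing the easy $\log Q$ (or $Q$) term by $Q^{1/2}$: this requires genuinely exploiting that the denominators $q^2$ are squares, not arbitrary integers of size up to $Q^2$, since for arbitrary moduli up to $Q$ the analogous count really is of size $Q$. The gain should come from the fact that $a_1 q_2^2 - a_2 q_1^2 = 0$ has, for fixed small nonzero value $h = a_1 q_2^2 - a_2 q_1^2$, only $O_\e((|h| Q)^\e)$ solutions in $q_1, q_2 \le Q$ (bounding by divisors of $h$ modulo square constraints), and $h$ ranges over $O(Q^4\Delta)$ values; balancing this against the alternative count is where the exponent $1/2$ in $Q^{1/2}$ should emerge. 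I would structure the final estimate as: $\mc N(\b) \ll Q^\e\big(Q^3\Delta + (\text{number of } q \le Q \text{ contributing})\big)$, and then prove the number of contributing $q$ is $\ll Q^\e(Q^3\Delta + Q^{1/2})$ by the spacing/divisor argument above.
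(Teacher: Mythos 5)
There is a genuine gap. Note first that the paper itself gives no proof of this lemma: it is quoted verbatim from Baier and Zhao \cite[Section 11]{bz}, where it is the main technical achievement of that paper, so any self-contained argument has to reproduce substantial work. Your sketch does not. The Farey-spacing argument you run gives, per dyadic block, at most $Q'^4\Delta+1$ contributing moduli, each supplying $\ll Q'^2\Delta+1$ admissible $a$; the resulting cross term $Q^4\Delta$ is \emph{not} $\ll Q^3\Delta$ in the range $\Delta\le Q^{-3}$ (at $\Delta=Q^{-3}$ it equals $Q$), so this route recovers only the trivial bound $Q^3\Delta+Q$, which you could have obtained directly by noting that each $q$ contributes $\ll q^2\Delta+1$ pairs. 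Everything therefore hinges on your final paragraph, and that is where the argument is missing rather than merely compressed.

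Here is the real issue. When $q^2\Delta<1$, the modulus $q$ contributes if and only if there is an integer within $\Delta q^2$ of $\b q^2$, i.e.\ if and only if $\|\b q^2\|\le\Delta q^2$; so the lemma reduces to showing that the number of $q\le Q$ with $\|\b q^2\|\le\Delta Q^2$ is $\ll(Q\Delta^{-1})^\e(Q^3\Delta+Q^{1/2})$ uniformly in $\b$. This is a statement about the distribution of the quadratic sequence $\b q^2$ modulo one: the term $Q^{1/2}$ is square-root cancellation in Gauss-type exponential sums, and any proof must split into cases according to the rational approximations of $\b$ (via Dirichlet's theorem) and then count solutions of quadratic congruences $q^2\equiv c\ (\mod b)$, which is essentially what Baier and Zhao do. Your proposed substitute --- a divisor bound asserting that $a_1q_2^2-a_2q_1^2=h$ has $O_\e((|h|Q)^\e)$ solutions with $q_1,q_2\le Q$ --- is unjustified: the $a_i$ are not free parameters but are forced to be the nearest integers to $\b q_i^2$, so there is no factorization of $h$ to exploit, and nothing in your outline engages the Diophantine type of $\b$, without which there is no mechanism that can produce the exponent $1/2$. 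As written, the proposal establishes only $\mc N(\b)\ll Q^3\Delta+Q$.
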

 
 \begin{proof}
This is due to Baier and Zhao \cite[Section 11]{bz}.
 \end{proof}
 
 \begin{lem}\label{lem2}
Let $Q \ge 1$. Let $a_1, \ldots, a_N$ be complex numbers,
 \[T(\a) = \sum_{n=1}^N a_n e(n\a).\]
Let $g \in \mb N$. Then
 \begin{equation}\label{eq2.1}
\sum_{q\le Q} \sum_{\substack{a = 1\\
(a, gq^2) = 1}}^{gq^2} \left|T\left(\frac q{gq^2}\right)\right|^2 \ll (QN)^\e \left(1 + \frac gN\right)(gQ^3 + Q^{1/2}N) \|a\|_2^2. 
 \end{equation}
 \end{lem}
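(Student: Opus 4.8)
The plan is a duality ($TT^*$) argument, then a dyadic reduction to a lattice-point count handled by \Cref{lem1}. It is enough to bound the left side of \eqref{eq2.1} by $(QN)^\e(gQ^3 + Q^{1/2}N)\|a\|_2^2$, since this is at most the right side; and one may assume $Q$ is large, small $Q$ being dispatched by the single-modulus estimate $\sum_{a \bmod m}|T(a/m)|^2 \ll (N+m)\|a\|_2^2$ with $m = gq^2$, summed over $q \le Q$.

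By duality the left side of \eqref{eq2.1} is $\ll \big(\max_{\beta}\Sigma(\beta)\big)\|a\|_2^2$, where
$$\Sigma(\beta) = \sum_{q \le Q}\ \sum_{\substack{a=1\\ (a,gq^2)=1}}^{gq^2}\min\!\left(N,\ \left\|\beta - \frac{a}{gq^2}\right\|^{-1}\right),$$
because the Gram matrix of the vectors $\big(e(na/(gq^2))\big)_{n\le N}$ consists of geometric sums, each $\ll \min(N, \|\cdot\|^{-1})$, so its operator norm is controlled by its largest row sum. Writing
$$\mc M(\beta,\Delta) = \#\Big\{(q,a) : q \le Q,\ 1 \le a \le gq^2,\ (a,gq^2)=1,\ \big\|\beta - \tfrac{a}{gq^2}\big\| \le \Delta\Big\},$$
a dyadic split of $\Sigma(\beta)$ by the size of $\|\beta - a/(gq^2)\|$ (equivalently, $\Sigma(\beta) \ll N\,\mc M(\beta,1/N) + \int_{1/N}^1 \mc M(\beta,t)\,t^{-2}\,dt$) reduces everything to the counting bound
\begin{equation}\label{eq2.2}
\mc M(\beta,\Delta) \ll (QN)^\e\big(gQ^3\Delta + Q^{1/2}\big)\qquad (\beta \in \mb R,\ \Delta \ge 1/N),
\end{equation}
which feeds back to give $\Sigma(\beta) \ll (QN)^{2\e}(gQ^3 + Q^{1/2}N)$.

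The crux is \eqref{eq2.2}. Here I would write $a = a_0 + jq^2$ with $1 \le a_0 \le q^2$ and $0 \le j < g$, so that $(a,q) = (a_0,q)$ and $\tfrac{a}{gq^2} = \tfrac{a_0}{gq^2} + \tfrac{j}{g}$ (I suspect this factorization is the argument the introduction attributes to Maynard). If $\|\beta - a/(gq^2)\| \le \Delta$ and $g\Delta \le 1$, then $\|g\beta - a_0/q^2\| \le g\Delta$ (multiply by $g$, using $a/q^2 = a_0/q^2 + j$ with $j \in \mb Z$), so the number of admissible pairs $(q,a_0)$ is at most $\mc N(g\beta)$ evaluated with parameter $g\Delta$ in \Cref{lem1}, i.e.\ $\ll (QN)^\e(gQ^3\Delta + Q^{1/2})$ since $g\Delta \ge 1/N$. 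For each such $(q,a_0)$, the admissible $j$ are those with $j/g$ within $\Delta$ of $\beta - a_0/(gq^2)$ modulo $1$, and as the points $j/g$ $(0 \le j < g)$ are $1/g$-spaced on the circle there are at most $1 + 2g\Delta$ of them. The product of the two counts gives \eqref{eq2.2} for $g\Delta \le 1$; the range $g\Delta > 1$ follows by covering $[\beta-\Delta, \beta+\Delta]$ with $O(g\Delta)$ intervals of length $\le 1/g$ and applying the case already proved to each.

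Everything but the use of \Cref{lem1} in \eqref{eq2.2} is routine large-sieve bookkeeping. The delicate point — and the one I expect to be the real obstacle — is keeping the $g$-dependence sharp: the naive passage from a residue mod $q^2$ to a residue mod $gq^2$ loses a factor $g$, and it is precisely the spacing of the levels $j/g$ that trims this to $O(1 + g\Delta)$ and makes \eqref{eq2.2} strong enough. One also has to keep $g\Delta$ in the range $[1/N,1]$ so that the $\e$-power in \Cref{lem1} is absorbed by $(QN)^\e$, and note that dropping the condition $(a,g)=1$ in every upper bound costs nothing.
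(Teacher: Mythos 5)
Your proof is correct and follows essentially the same route as the paper: the key step --- writing $a = a_0 + jq^2$, multiplying by $g$ so that the count of pairs $(q,a_0)$ reduces to \Cref{lem1} at modulus $q^2$ with $g\beta$ and spacing $g\Delta$, and then observing that the $1/g$-spaced points $j/g$ contribute only $O(1+g\Delta)$ --- is exactly the paper's argument. The only cosmetic difference is that the paper inserts its counting bound (taken at the single scale $\Delta = 1/N$, whence the factor $1+g/N$ in \eqref{eq2.1}) into Montgomery's large-sieve theorem, rather than re-deriving that step by duality and dyadic integration over all scales as you do.
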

 
 \begin{proof}
We first show that, for real $\a$ and $\Delta > 0$, the number $\mc M(\a)$ of solutions of
 \begin{equation}\label{eq2.2}
\left\|\frac a{gq^2} - \a\right\| \le \Delta, 0 \le a \le gq^2 -1, (a, gq^2) = 1, 1 \le q \le Q,
 \end{equation}
satisfies
 \begin{equation}\label{eq2.3}
\mc M(\a) \ll (1 + g\Delta)(Q^3(g\Delta) + Q^{1/2})(Q\Delta^{-1})^\e.
 \end{equation}

To see this, write $a = b + q^2n$, $0 \le n < g$, $0 \le b < q^2$. Then \eqref{eq2.2} implies
 \[\left\|\frac b{q^2} - g\a\right\| = \left\|
 \frac{b + q^2n}{q^2} - g\a\right\| \le g\Delta.\]
The number of possible $b$ is 
 \[\ll (Q^3(g\Delta) + Q^{1/2})(Q\Delta^{-1})^\e\]
by \Cref{lem1}. Once $b$ is fixed, \eqref{eq2.2} implies
 \[\left\|\frac b{gq^2} + \frac nq - \a\right\| \le \Delta.\]
There are at most $2g\Delta + 1$ possible $n$, and the bound \eqref{eq2.3} follows.

By \cite[Theorem 2.1]{mont}, the left-hand side of \eqref{eq2.1} is bounded by
 \begin{equation}\label{eq2.4}
\ll (N + \Delta^{-1})\left(\max_{\a\, \in\, \mb R} \mc M(\a)\right) \|a\|_2^2,
 \end{equation}
for any $\Delta > 0$. We take $\Delta = N^{-1}$ and apply \eqref{eq2.3} to obtain the lemma. 
 \end{proof}
 
 \begin{lem}\label{lem3}
Let $Q = x^\t$ and $0 < \l \le \t$, $x \ge M \gg x^{\t - \l}$. Let $c_1, \ldots, c_M$ be complex numbers. Let
 \[T(\l) = \sum_{Q < q^2 \le 2Q}\
 \sum_{\substack{\chi\ (\mod{q^2})\\
 x^\l\, <\, C(\chi)\, \le\, 2x^\l}} \Bigg|\sum_{m=1}^M
 c_m \chi(m)\Bigg|^2.\] 
Then
 \[T(\l) \ll x^\e(Q^{1/2} x^\l + Q^{3/4} Mx^{-\l/2})
 \|c\|_2^2.\]
 \end{lem}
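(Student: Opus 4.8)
\emph{Proof proposal.} The plan is to convert the character sums into exponential sums by Gauss sums and then feed them into the large sieve for square moduli, \Cref{lem2}, exploiting the fact that the conductor of a character modulo $q^2$ divides a square. Write $Q^{1/2}<q\le (2Q)^{1/2}$ throughout. A character $\chi\,(\mod q^2)$ with $C(\chi)=r$ is induced by a primitive $\chi^*\,(\mod r)$ with $r\mid q^2$, and $\sum_m c_m\chi(m)=\sum_{(m,q)=1}c_m\chi^*(m)$. First I would remove the coprimality condition by M\"obius inversion: from $[(m,q)=1]=\sum_{e\mid(m,q)}\mu(e)$ one gets $\sum_{(m,q)=1}c_m\chi^*(m)=\sum_{e\mid q}\mu(e)\chi^*(e)\sum_{n\le M/e}c_{en}\chi^*(n)$, so by Cauchy--Schwarz in $e$ and $\tau(q)\ll x^\e$ one has $T(\l)\ll x^\e\sum_e S_e$, where
\[
 S_e:=\sum_{\substack{Q<q^2\le 2Q\\ e\mid q}}\ \sum_{\substack{r\mid q^2\\ x^\l<r\le 2x^\l}}\ \sum_{\chi\,(\mod r)}^{*}\ \Bigl|\sum_{n\le M/e}c_{en}\chi(n)\Bigr|^2 .
\]
The crucial point is that the coefficient vector $(c_{en})_n$ no longer depends on $q$; and since $\sum_e\|(c_{en})_n\|_2^2=\sum_m|c_m|^2\tau(m)\ll x^\e\|c\|_2^2$, it suffices to bound $S_e$ by the asserted quantity uniformly in $e$.

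Next, for fixed $e$ and each $r$, I would use the Gauss sum identity $\chi(n)=\tau(\overline\chi)^{-1}\sum_{h\,(\mod r)}\overline\chi(h)e(hn/r)$ (valid for all $n$ as $\chi$ is primitive), together with $|\tau(\overline\chi)|=r^{1/2}$ and orthogonality modulo $r$, to obtain
\[
 \sum_{\chi\,(\mod r)}^{*}\Bigl|\sum_n c_{en}\chi(n)\Bigr|^2\ \le\ \sum_{\substack{h\,(\mod r)\\ (h,r)=1}}\Bigl|\sum_{n\le M/e}c_{en}\,e(hn/r)\Bigr|^2 .
\]
Carrying out the sum over $q$: with $t(r):=\prod_p p^{\lceil v_p(r)/2\rceil}$ one has $r\mid q^2\iff t(r)\mid q$, so the number of admissible $q$ (with also $e\mid q$) is $\ll 1+Q^{1/2}/t(r)$. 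Hence
\[
 S_e\ \ll\ \sum_{x^\l<r\le 2x^\l}\Bigl(1+\frac{Q^{1/2}}{t(r)}\Bigr)\ \sum_{\substack{h\,(\mod r)\\ (h,r)=1}}\Bigl|\sum_{n\le M/e}c_{en}\,e(hn/r)\Bigr|^2 .
\]

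The key manoeuvre is to write each $r$ uniquely as $r=r_1r_2^2$ with $r_1$ squarefree, so that $t(r)=r_1r_2$, and to split dyadically according to $r_1\in(R_1,2R_1]$; on such a block $r_2\asymp(x^\l/R_1)^{1/2}$ and the weight is $\asymp 1+Q^{1/2}(R_1x^\l)^{-1/2}$. For each fixed squarefree $r_1\sim R_1$, the fractions $h/(r_1r_2^2)$ with $(h,r_1r_2^2)=1$ are exactly those occurring in \Cref{lem2} with $g=r_1$ and with $q$ there replaced by $r_2$. Applying \Cref{lem2} with $g=r_1$, with $r_2\le Q_0:=2(x^\l/R_1)^{1/2}$, and with the length taken to be $N=M$ (padding $(c_{en})_n$ by zeros, which leaves the exponential sum unchanged and makes $1+g/N=1+r_1/M\ll 1$ harmless, since $r_1\le R_1\ll x^{\t-\l}\le M$), gives
\[
 \sum_{r_2\le Q_0}\ \sum_{\substack{1\le h\le r_1r_2^2\\ (h,r_1r_2^2)=1}}\Bigl|\sum_{n\le M/e}c_{en}\,e\Bigl(\tfrac{hn}{r_1r_2^2}\Bigr)\Bigr|^2\ \ll\ x^\e\bigl(r_1Q_0^3+Q_0^{1/2}M\bigr)\|(c_{en})_n\|_2^2 .
\]
Summing over the $\ll R_1$ squarefree $r_1\sim R_1$, inserting the weight and $Q_0\asymp(x^\l/R_1)^{1/2}$, the contribution of the block is
\[
 \ll\ x^\e\Bigl(1+\frac{Q^{1/2}}{(R_1x^\l)^{1/2}}\Bigr)\bigl(R_1^{1/2}x^{3\l/2}+R_1^{3/4}x^{\l/4}M\bigr)\|(c_{en})_n\|_2^2 .
\]

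To finish, I would note that only $R_1\ll x^{\t-\l}$ can contribute, since $t(r)=(R_1x^\l)^{1/2}$ must be $\le(2Q)^{1/2}$ for an admissible $q$ to exist at all. Using this together with $M\gg x^{\t-\l}$, each of the four products obtained by expanding the last display is $\ll (Q^{1/2}x^\l+Q^{3/4}Mx^{-\l/2})\|(c_{en})_n\|_2^2$ --- for instance $Q^{1/2}(R_1x^\l)^{-1/2}\cdot R_1^{1/2}x^{3\l/2}=Q^{1/2}x^\l$ identically. Summing over the $O(\log x)$ dyadic values of $R_1$ bounds $S_e$, and then summing over $e$ completes the proof. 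The main obstacle, as I see it, is not any individual estimate but the simultaneous control of all the parasitic factors --- the divisor factor $\tau(q)$, the $e$-sum, the factor $1+g/N$ in \Cref{lem2}, and the ``$1+$'' in the multiplicity count --- and the check that the optimisation in $R_1$ really remains inside $Q^{1/2}x^\l+Q^{3/4}Mx^{-\l/2}$; this is precisely where the hypotheses $M\gg x^{\t-\l}$ and the consequent $R_1\ll x^{\t-\l}$ are indispensable, while arranging the factorisation $r=r_1r_2^2$ so that \Cref{lem2} applies with $g=r_1$ is the conceptual crux.
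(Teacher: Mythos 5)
Your proof is correct and follows essentially the same route as the paper's: both decompose the conductor as (squarefree)$\times$(square), pass from primitive characters to additive characters via the standard inequality, and apply \Cref{lem2} with $g$ equal to the squarefree part, your dyadic parameter $R_1$ playing the role of the paper's $G$ and your multiplicity weight $1+Q^{1/2}(R_1x^{\l})^{-1/2}$ matching the paper's sum over $t$ with $gt^2\asymp x^{\t-\l}$. The only differences are bookkeeping --- you remove the condition $(m,q)=1$ by M\"obius inversion and count the moduli $q$ above each conductor via $t(r)\mid q$, where the paper keeps the restriction $(m,t)=1$ and parametrizes $q=gtk$ --- and your verification that all four resulting terms stay within $Q^{1/2}x^{\l}+Q^{3/4}Mx^{-\l/2}$ is sound.
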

 
 \begin{proof}
For a character $\chi\pmod{q^2}$ counted in $T(\l)$, induced by a primitive character $\chi'\pmod{C(\l)}$, we have
 \begin{equation}\label{eq2.5}
C(\chi) = gk^2 \in (x^\l, 2x^\l]
 \end{equation}
with $g$ squarefree, $k \in \mb N$; and
 \[\chi(m)= \begin{cases}
 \chi'(m) & \text{if } (m, q) = 1\\
 0 & \text{if } (m, q) > 1.
 \end{cases}\]
Since $C(\chi)\mid q^2$, we have
 \[vgk^2 = q^2 \in (x^\t, 2x^\t]\]
for a natural number $v$. Obviously $v = gt^2$, $t \in \mb N$,
 \begin{equation}\label{eq2.6}
x^{\t/2} < q = gtk \le (2x^\t)^{1/2}.
 \end{equation}
It follows that
 \begin{equation}\label{eq2.7}
T(\l) \le \sum_{\substack{g\, \ge\, 1,\, t \, \ge 1\\
\frac 12\, x^{\t-\l}<\, gt^2\le\, 2x^{\t-\l}}}\ 
\sum_{\frac{x^{\t/2}}{gt} < k \le \frac{2x^{\t/2}}{gt}}\ \, \sideset{}{^*}\sum_{\chi'\, (\mod gk^2)} \Bigg|
\sum_{\substack{m\, \le\, M\\
(m,t)\, = \, 1}} c_m \chi'(m)\Bigg|^2.
 \end{equation}
Here $\small{\sum}^*$ denotes a sum restricted to primitive characters. By a standard inequality \cite[Chapter 27, (10)]{dav},
 \begin{align*}
\sideset{}{^*}\sum_{\chi'(\mod gk^2)}& \Bigg| \sum_{\substack{m\, \le \, M\\
(m,t) \, = \, 1}} c_m \chi'(m)\Bigg|^2\\[2mm]
&\le \frac{\phi(gk^2)}{gk^2} \sum_{\substack{
a\, =\, 1\\
(a,\, gk^2)\, = \, 1}}^{gk^2} \Bigg|
\sum_{\substack{
m\, =\, 1\\
(m,\, t)\, = \, 1}} c_m e
\left(\frac{am}{gk^2}\right)\Bigg|^2
 \end{align*}
Using \Cref{lem2} for fixed $g$ and $t$, the sum over $k$ on the right-hand side of \eqref{eq2.7} is
 \[\ll (QM)^{\e/3}\left(g\left(\frac{Q^{1/2}}{gt}\right)^3
 + \frac{Q^{1/4}}{(gt)^{1/2}}\, M\right) \|c\|_2^2.\]
(Note that $g \ll x^{\t-\l} \ll M$ here.) For some $G \ge 1$, $T \ge 1$ with $GT^2 \asymp x^{\t-\l}$, we have
 \begin{align*}
T(\l) &\ll (\log x)^2(QM)^{\e/3} \sum_{G \le g < 2G}\ \sum_{T \le t < 2T} \left\{g\left(\frac{Q^{1/2}}{gt}\right)^3 + \frac{Q^{1/4}}{(gt)^{1/2}}\, M\right\} \|c\|_2^2\\[2mm]
&\ll x^\e (Q^{3/2} G^{-1} T^{-2} + Q^{1/4}MG^{1/2} T^{1/2}) \|c\|_2^2\\[2mm]
&\ll x^\e (Q^{1/2} x^\l + Q^{3/4} Mx^{-\l/2})\|c\|_2^2.
 \end{align*}
This completes the proof of \Cref{lem3}.
 \end{proof}

 \begin{lem}\label{lem4}
Let $1 \le x^\l \le Q \ll x^{1/2-\e}$, $x^\l \ge Q^{1/2} x^{\e/6}$. Let $H$ and $K$ satisfy
 \[Qx^{-\l} \ll K \ll H \ll x^{3/5}, \ HK \ll x.\]
Let $a_n$ $(K < n \le 2K)$ and $b_m$ $(H < m \le 2H)$ be complex numbers, $a_n \ll x^\d$, $b_m\ll x^\d$. Let
 \begin{gather*}
K(s, \chi) = \sum_{K\, <\, n\, \le\, 2K} a_n \chi(n) n^{-s},\\[2mm]
H(s, \chi) = \sum_{H\, <\, m\, \le\, 2H} b_m \chi(m) m^{-s},\\[2mm]
S = \sum_{Q\, <\, q^2\, \le\, 2Q} \ \sum_{\substack{
\chi\ (\mod q^2)\\
x^\l\, <\, C(\chi)\, \le\, 2x^\l}} \left|H\left(\frac 12 + it, \chi\right) K\left(\frac 12 + it, \chi\right)\right|.
 \end{gather*}
Then
 \[S \ll x^{1/2-\e/20} Q^{1/2}.\]
 \end{lem}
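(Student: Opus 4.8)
The plan is to estimate $S$ by Cauchy--Schwarz --- separating the two Dirichlet polynomials --- and then to bound the two resulting second moments by \Cref{lem3}. Set
\[
\Sigma_H=\sum_{Q<q^2\le 2Q}\ \sum_{\substack{\chi\ (\mod q^2)\\ x^\lambda<C(\chi)\le 2x^\lambda}}\bigl|H(\tfrac12+it,\chi)\bigr|^2,
\]
and let $\Sigma_K$ be defined in the same way with $K$ in place of $H$, so that $S\le\Sigma_H^{1/2}\Sigma_K^{1/2}$. To bound $\Sigma_H$, apply \Cref{lem3} with $M=2H$ and coefficients $c_m=b_m m^{-1/2-it}$ for $H<m\le 2H$ and $c_m=0$ otherwise: then $\|c\|_2^2=\sum_{H<m\le 2H}|b_m|^2 m^{-1}\ll x^{2\delta}$, and, writing $Q=x^\theta$, the hypothesis $x^{\theta-\lambda}\ll M\le x$ of \Cref{lem3} holds since $Qx^{-\lambda}\ll K\ll H\ll x^{3/5}$. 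The same applies to $\Sigma_K$ with $M=2K$, as $Qx^{-\lambda}\ll K$ is assumed. Using also that the factor $x^\varepsilon$ in \Cref{lem3} may be replaced by $x^{\varepsilon/100}$ --- its proof loses only $(QM)^{\varepsilon/3}(\log x)^2$, which can instead be made $(QM)^{\varepsilon/300}(\log x)^2\ll x^{\varepsilon/100}$ --- we obtain
\[
\Sigma_H\ll x^{2\delta+\varepsilon/100}\big(Q^{1/2}x^\lambda+Q^{3/4}Hx^{-\lambda/2}\big),\qquad \Sigma_K\ll x^{2\delta+\varepsilon/100}\big(Q^{1/2}x^\lambda+Q^{3/4}Kx^{-\lambda/2}\big).
\]

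Multiplying these two bounds, using $K\le H$ in the cross terms, and taking square roots yields
\[
S\ll x^{2\delta+\varepsilon/100}\Big(Q^{1/2}x^\lambda+Q^{5/8}H^{1/2}x^{\lambda/4}+Q^{3/4}(HK)^{1/2}x^{-\lambda/2}\Big),
\]
so it remains to bound each of the three terms by $x^{1/2-\varepsilon/20}Q^{1/2}$. For the first, $x^\lambda\le Q\ll x^{1/2-\varepsilon}$ gives $Q^{1/2}x^\lambda\ll Q^{1/2}x^{1/2-\varepsilon}$. For the third, $HK\ll x$ and $x^\lambda\ge Q^{1/2}x^{\varepsilon/6}$ give
\[
Q^{3/4}(HK)^{1/2}x^{-\lambda/2}\ll Q^{3/4}x^{1/2}\big(Q^{1/2}x^{\varepsilon/6}\big)^{-1/2}=Q^{1/2}x^{1/2-\varepsilon/12}.
\]
For the middle term, $H\ll x^{3/5}$ together with $x^\lambda\le Q\ll x^{1/2-\varepsilon}$ gives
\[
Q^{5/8}H^{1/2}x^{\lambda/4}=Q^{1/2}\big(Q^{1/8}H^{1/2}x^{\lambda/4}\big)\ll Q^{1/2}x^{39/80-3\varepsilon/8},
\]
and $39/80<1/2$. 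In every case the exponent of $x$ lies below $1/2-\varepsilon/20$ by more than $2\delta+\varepsilon/100=2\varepsilon^2+\varepsilon/100$ once $\varepsilon$ is small enough --- the tightest case is the third term, where the margin is $\varepsilon/12-\varepsilon/20=\varepsilon/30$, so $\varepsilon\le 7/600$ already suffices --- and the bound $S\ll x^{1/2-\varepsilon/20}Q^{1/2}$ follows.

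The only real ingredient here is \Cref{lem3}; the rest is the routine Cauchy--Schwarz treatment of a product of two Dirichlet polynomials over square moduli. The delicate point is the middle term $Q^{5/8}H^{1/2}x^{\lambda/4}$: its exponent works out to $39/80$ against the target $40/80$, leaving only a cushion of $1/80$ in the power of $x$, and this is exactly where the hypothesis $H\ll x^{3/5}$ --- rather than any weaker bound on $H$ --- is indispensable. The substance of the proof is thus the verification that the conditions available to us, $Qx^{-\lambda}\ll K$ (which makes \Cref{lem3} applicable), $H\ll x^{3/5}$ (which controls the cross term), $HK\ll x$, and $Q^{1/2}x^{\varepsilon/6}\le x^\lambda\le Q\ll x^{1/2-\varepsilon}$, together suffice to push all three terms below $x^{1/2-\varepsilon/20}Q^{1/2}$.
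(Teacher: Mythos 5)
Your proposal is correct and follows essentially the same route as the paper: Cauchy--Schwarz to separate the two Dirichlet polynomials, two applications of \Cref{lem3} with coefficients $b_m m^{-1/2-it}$ and $a_n n^{-1/2-it}$, and a term-by-term verification that the three resulting terms are $\ll x^{1/2-\e/20}Q^{1/2}$ using $x^\l\le Q\ll x^{1/2-\e}$, $x^\l\ge Q^{1/2}x^{\e/6}$, $HK\ll x$ and $H\ll x^{3/5}$. Your explicit treatment of the $x^{\e}$ factor coming from \Cref{lem3} is, if anything, slightly more careful than the paper's, which simply absorbs it into $x^{3\d}$.
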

 
 \begin{proof}
We apply the Cauchy-Schwarz inequality to $S$, followed by applications of \Cref{lem3} to each of the two sums over $q$, $\chi$. The conditions
 \[H \gg x^{\t - \l} \  , \ K \gg x^{\t - \l}\]
are fulfilled since
 \[H \ge K \gg Qx^{-\l}.\]
Since $\sum\limits_m |b_m m^{-\frac 12 - it}|^2 \ll x^{2\d}$ and similarly for $\sum\limits_n|a_n n^{-\frac 12 - it}|^2$, we have
 \begin{align*}
S &\ll x^{3\d} (Q^{1/4}x^{\l/2} + Q^{3/8} K^{1/2} x^{-\l/4})(Q^{1/4} x^{\l/2} + Q^{3/8} H^{1/2} x^{-\l/4})\\[2mm]
&\ll x^{3\d} (Q^{1/2} x^\l + Q^{3/4} x^{1/2-\l/2} + Q^{5/8} x^{\l/4} H^{1/2})\\[2mm]
&\ll x^{3\d} (Q^{3/2} + Q^{3/4} x^{1/2-\l/2} + Q^{7/8} x^{3/10}).
 \end{align*}
Each of these three terms is $\ll x^{1/2 - \e/20}Q^{1/2}$:
 \begin{align*}
&Q^{3/2} x^{3\d} (x^{1/2-\e/20} Q^{1/2})^{-1} \ll Qx^{\e/20 + 3\d - 1/2} \ll 1;\\[2mm]
&Q^{3/4} x^{1/2 - \l/2 + 3\d} (x^{1/2-\e/4} Q^{1/2})^{-1} \ll Q^{1/4}x^{-\l/2 + 3\d + \e/20} \ll 1;\\[2mm]
&Q^{7/8} x^{3/10 + 3\d} (Q^{1/2} x^{1/2-\e/20})^{-1} \ll Q^{3/8} x^{-1/5 + \e/20+ 3\d} \ll 1. 
 \end{align*}
This completes the proof of \Cref{lem4}.
 \end{proof}

\section{Proof of \Cref{thm1}}

It is convenient to write $S(Q) = \{q^2 : Q < q^2 \le 2Q\}$.

 \begin{lem}\label{lem5}
Let $0 < \g < 1$. There is a subset $F(Q)$ of $S(Q)$ with
 \[\#\, F(Q) \ll Q^{1/2-\b},\]
such that for $q^2 \in S(Q)\backslash F(Q)$, $\chi$ a nonprincipal character $(\mod q^2)$ and ${\rm Re}\, s = 1/2$, we have
 \[\sum_{n\, \le\, N} \chi(n)n^{-s} \ll |s|\,
 N^{\frac 12-\b}\quad (N \ge q^\g).\]
Here $\b = \b(\g) > 0$. The implied constants depend on $\g$.
 \end{lem}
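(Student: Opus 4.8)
## Proof proposal for Lemma \ref{lem5}

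I would split the nonprincipal characters $\chi\ (\mod{q^2})$ according to the size of the conductor $C(\chi)$: those of small conductor yield to P\'olya--Vinogradov, the rest to a contour argument resting on a classical zero-free region, and $F(Q)$ will consist of the $q^2$ carrying some large-conductor $\chi$ whose $L$-function violates that region. First the routine reductions. With $C$ a suitable large constant, one may assume $q^\g\le N\le q^C$, since for $N>q^C$ the terms $n>q^C$ contribute $\ll|s|\,q^{1+\e-C/2}\ll|s|$ by P\'olya--Vinogradov and partial summation; and one may assume $|t|\le q^C$, since otherwise $\big|\sum_{n\le N}\chi(n)n^{-s}\big|\le\sum_{n\le N}n^{-1/2}\ll N^{1/2}\le|s|\,N^{1/2-\b}$. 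Finally, replacing $\mathbf 1_{n\le N}$ by a smooth majorant $g(n/N)$ with $g\equiv1$ on $[0,1]$, $\supp g\subseteq[0,1+\eta]$, $0\le g\le1$, and $\eta=N^{-\b}$ alters the sum by $\ll\eta N^{1/2}=N^{1/2-\b}$, so it is enough to estimate $\sum_n\chi(n)n^{-s}g(n/N)$.

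If $C(\chi)\le q^{\g(1-3\b)}$, no zero information is needed: writing $\chi(n)=\chi'(n)$ off $q$ with $\chi'$ primitive $(\mod{C(\chi)})$ and removing $(n,q)=1$ by M\"obius inversion turns the sum into $\sum_{d\mid q}\mu(d)\chi'(d)d^{-s}\sum_{m\le N/d}\chi'(m)m^{-s}$, and partial summation from $\big|\sum_{m\le u}\chi'(m)\big|\ll C(\chi)^{1/2}\log C(\chi)$ gives $\sum_{n\le N}\chi(n)n^{-s}\ll q^{o(1)}|s|\,C(\chi)^{1/2}\log C(\chi)\ll|s|\,N^{1/2-\b}$, since $C(\chi)^{1/2}\le N^{1/2-\b}q^{-\g\b/2}$ here. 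For the remaining characters I would fix a small $\eta_0=\eta_0(\g)>0$, put $\b=\eta_0/4$, and set
\[F(Q)=\Big\{q^2\in S(Q):\ \text{some nonprincipal }\chi\ (\mod{q^2})\text{ with }C(\chi)>q^{\g(1-3\b)}\text{ has a zero of }L(s,\chi)\text{ in }\mathrm{Re}\,s>1-\eta_0,\ |\mathrm{Im}\,s|\le q^{C}\Big\}.\]

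Suppose $q^2\notin F(Q)$ and $C(\chi)=f>q^{\g(1-3\b)}$. Then $L(s,\chi)$ --- equivalently $L(s,\chi')$, since the finitely many extra Euler factors are nonzero for $\mathrm{Re}\,s>0$ --- has no zero in $\mathrm{Re}\,s>1-\eta_0$, $|\mathrm{Im}\,s|\le q^{C}$, whence the standard bound $|L(s,\chi)|\ll(f(2+|\mathrm{Im}\,s|))^\e$ holds on $\mathrm{Re}\,s=1-\eta_0/2$. Writing $\sum_n\chi(n)n^{-s}g(n/N)=\frac1{2\pi i}\int_{(c)}L(s+w,\chi)\,N^w\,G(w)\,dw$, where $G$ is the Mellin transform of $g$, holomorphic for $\mathrm{Re}\,w>0$ and satisfying $G(\s+iv)\ll(1+|v|)^{-1}\min\big(1,(|v|\eta)^{-k}\big)$, I would move the contour to $\mathrm{Re}\,w=1/2-\eta_0/2>0$; no pole is crossed, and the shifted integral is $\ll N^{1/2-\eta_0/2}(fN^\b)^\e\log N$. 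Since $f\le q^2\le N^{2/\g}$, choosing $\e$ small in terms of $\g$ makes this $\ll|s|\,N^{1/2-\b}$, which with the smoothing error disposes of $q^2\notin F(Q)$.

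It remains --- and here lies the real work --- to show $\#F(Q)\ll Q^{1/2-\b}$. Each $q^2\in F(Q)$ is divisible by the conductor $f>q^{\g(1-3\b)}\gg Q^{\g(1-3\b)/2}$ of some primitive $\chi'$ with a zero of $L(\cdot,\chi')$ in $\mathrm{Re}\,s>1-\eta_0$, $|\mathrm{Im}\,s|\le q^{C}$, and for each such $f$ there are $\ll\sqrt Q/\sqrt f$ integers $q^2\in S(Q)$ with $f\mid q^2$. Hence, with $N(\s,T,\chi')$ the number of zeros of $L(\cdot,\chi')$ in $\mathrm{Re}\,s\ge\s$, $|\mathrm{Im}\,s|\le T$,
\[\#F(Q)\ll\sqrt Q\sum_{f>q^{\g(1-3\b)}}\frac1{\sqrt f}\ \sideset{}{^*}\sum_{\chi'\ (\mod f)}N\big(1-\eta_0,\,q^{C},\,\chi'\big),\]
and the classical large-sieve zero-density bound $\sum_{f\asymp\phi}\sideset{}{^*}\sum_{\chi'}N(1-\eta_0,T,\chi')\ll(\phi^2T)^{c\eta_0}x^{o(1)}$ makes each dyadic piece $\ll\sqrt Q\,\phi^{2c\eta_0-1/2}q^{cC\eta_0}x^{o(1)}$; for $\eta_0<1/(4c)$ this decreases in $\phi$, hence is governed by $\phi\asymp q^{\g(1-3\b)}$, and taking $\eta_0$ (so also $\b$) small enough in terms of $\g$ gives $\#F(Q)\ll Q^{1/2-\b}$. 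The obstacle is precisely that the multiplicity $\sqrt Q/\sqrt f$ alone only yields $\asymp\sqrt Q$; one gains by forcing the offending characters to have conductor $\gg q^{\g(1-3\b)}$, a fixed power of $q$ --- which is why the small-conductor characters are peeled off first --- and by invoking a zero-density estimate that saves a power of the conductor down to a fixed line $\mathrm{Re}\,s=1-\eta_0$. All $\g$-dependence sits in the admissible choices of $\eta_0$, $\b$, $\e$.
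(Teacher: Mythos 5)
First, note that the paper does not prove this lemma at all: it is quoted as a special case of \cite[Lemma 6]{rcb}, so your argument is necessarily an independent reconstruction. Your reduction steps, your treatment of the small-conductor characters by P\'olya--Vinogradov, and your count of the exceptional set (the multiplicity $\ll\sqrt{Q}/\sqrt{f}$ of $q$ with $f\mid q^2$, combined with a log-free zero-density estimate over conductors $f\gg q^{\gamma(1-3\beta)}$) are all sound. The gap is in the large-conductor case, at the sentence ``whence the standard bound $|L(s,\chi)|\ll(f(2+|\mathrm{Im}\,s|))^{\varepsilon}$ holds on $\mathrm{Re}\,s=1-\eta_0/2$.'' This is not a standard consequence of a zero-free region of width $\eta_0$, and it is not available. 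A zero-free region $\mathrm{Re}\,s>1-\eta_0$ lets you control $\log L$ there, but the Borel--Carath\'eodory/three-circles deductions are anchored at $\sigma=1+1/\log(fT)$, a circle of radius only $1/\log(fT)$, and they give nothing better at $\sigma=1-\eta_0/2$ than the unconditional convexity bound $|L|\ll(f(2+|t|))^{\eta_0/4+o(1)}$. (The familiar bound $|L|\ll \mathfrak{q}^{o(1)}$ to the right of the critical line uses the full half-plane of analyticity supplied by GRH; a strip of width $\eta_0$ does not reproduce it at depth $\eta_0/2$.) With the convexity bound, your shifted integral is $\ll N^{1/2-\eta_0/2}\,(fT)^{\eta_0/4+o(1)}$, and since $f$ may be as large as $q^2\le N^{2/\gamma}$ with $2/\gamma>2$, the loss $(fT)^{\eta_0/4}\ge N^{\eta_0/(2\gamma)}$ exceeds the gain $N^{-\eta_0/2}$ for every $\gamma<1$: the bound degrades to something larger than $N^{1/2}$. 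This is exactly the hard regime of the lemma --- the inducing conductor can be an arbitrarily large power of $N$ --- and it is also why a wider zero-free region cannot be substituted: widening it to the point where the $L$-bound becomes usable would force the zero-density count of the exceptional set past $Q^{1/2}$.

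For what it is worth, the known proofs of statements of this shape (Baker--Harman, \emph{The Brun--Titchmarsh theorem on average}, Lemma 4, which is the ancestor of \cite[Lemma 6]{rcb}) avoid $L$-functions and zeros altogether. One raises the sum to a power $k$ with $k\gamma>2+C$, so that $\bigl(\sum_{n\le N}\chi(n)n^{-s}\bigr)^{k}=\sum_{m\le N^{k}}d_{k,N}(m)\chi(m)m^{-s}$ is a Dirichlet polynomial whose length $N^{k}\ge q^{k\gamma}$ exceeds the large-sieve threshold; the (hybrid) large sieve then shows that the total $2k$-th moment over all $q$, $\chi$, $t$ is $\ll N^{2k+o(1)}$, i.e.\ comparable to the trivial bound for a \emph{single} character, so at most $N^{2k\beta+o(1)}\le Q^{2kC\beta+o(1)}$ pairs can exceed $N^{k(1-2\beta)}$; taking $\beta$ small in terms of $k$ and $C$ makes this $\ll Q^{1/2-\beta}$. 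If you want a self-contained proof you should rework the large-conductor case along those lines (or simply rely on the citation, as the paper does); as written, the contour-shift step does not close.
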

 
 \begin{proof}
This is a special case of \cite[Lemma 6]{rcb}.
 \end{proof}
 
We shall refer to $F(Q)$ in the remaining lemmas. The following lemma is a variant of \cite[Proposition 1]{rcb}.

 \begin{lem}\label{lem6}
Let $M_1, \ldots, M_{15}$ be numbers with $M_1 \ge \cdots \ge M_{15} \ge 1$, and suppose that $\{1, \ldots, 15\}$ has a partition into subsets $A$, $B$ such that 
 \[\prod_{i\, \in\, A} M_i \ll x^{1/2-3\e/4}, \
 \prod_{i\, \in\, B} M_i \ll x^{1/2 - 3\e/4}.\]
Let $a_i(m)$ $(M_i/2 < m \le M_i, 1 \le i\le 15)$ be complex sequences with
 \[|a_i(m)| \le \log m \quad (1 \le i \le 15, M_i/2
 < m \le M_i).\]
Suppose that, whenever $M_i > x^{1/8}$, $a_i(m)$ is $1\ (M_i/2 < m \le M_i)$ or $\log m$ $(M_i/2 < m \le M_i)$. Let
 \begin{gather*}
M_i(s, \chi) = \sum_{M_i/2\, < \, m \, \le\, M_i} a_i(m) \chi(m) m^{-s},\\[2mm]
L = x/(M_1 \ldots M_{15})\, , \, B_1(s, \chi) = \sum_{Lx^{-\e}\, <\, n\, \le\, L} \chi(n)n^{-s}.
 \end{gather*}
Then for ${\rm Re}\, s = 1/2$ and $Q \ll x^{1/2-\e}$,
 \[S : = \sum_{q\,\in\, S(Q)\backslash F(Q)}\
 \sum_{\substack{\chi\, (\mod q)\\
 \chi\, \ne\, \chi_0}} |B_1(s,\chi) M_1(s, \chi) 
 \ldots M_{15} (s, \chi)| \ll |s|^3 Q^{1/2} x^{1/2-3\d}.\]
 \end{lem}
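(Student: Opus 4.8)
The plan is to feed the fifteen factors $M_i(s,\chi)$ together with the short smooth factor $B_1(s,\chi)$ into a product decomposition that lets me pair things up into exactly two Dirichlet polynomials $H(s,\chi)$ and $K(s,\chi)$ of lengths $H$ and $K$ satisfying the hypotheses of \Cref{lem4}, and then invoke that lemma. First I would dispose of the range where $x^\l$ is small: choosing $x^\l$ to be a small fixed power of $x$ with $x^\l \ge Q^{1/2}x^{\e/6}$ (so $\l = \l(\e)$ and $Q^{1/2} \ll x^{\l}$, legitimate since $Q \ll x^{1/2-\e}$), I will split the sum over $q \in S(Q)\setminus F(Q)$ and nonprincipal $\chi\pmod q$ according to the size of the conductor $C(\chi)$, dyadically, writing $C(\chi)\in(x^{\l_j},2x^{\l_j}]$. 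For conductors below $q^\g$ (with $\g$ fixed as in \Cref{lem5}) I would use \Cref{lem5} together with the trivial pointwise bounds on the other factors to absorb that contribution; this is why $F(Q)$ was removed. For the remaining dyadic ranges $x^\l \le Q^{1/2}x^{\e/6}$ is false, so \Cref{lem4} applies after the next step.

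The core combinatorial step is to arrange the fifteen $M_i$'s and the factor $B_1$ into two groups whose products $H$ and $K$ obey $Qx^{-\l}\ll K\ll H\ll x^{3/5}$ and $HK\ll x$. Here I would use the partition $\{1,\dots,15\}=A\cup B$ already supplied in the hypothesis, which guarantees $\prod_{i\in A}M_i$ and $\prod_{i\in B}M_i$ are each $\ll x^{1/2-3\e/4}$; combining one of these with part or all of $B_1$ (whose length $L=x/(M_1\cdots M_{15})$ is itself $\ll x^{1/2}$ roughly, since the product of all $M_i$ is $\gg x^{1/2}$ up to the split) and using a further dyadic subdivision of any factor that is too large, I can balance the two resulting polynomials so that both lengths lie in $[Qx^{-\l},x^{3/5}]$. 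The upper bound $HK\ll x$ follows because $H K$ is, up to constants, at most $x$ (the fifteen $M_i$'s times $L$ recover $x$, and $B_1$ is a sub-piece). The special hypothesis on factors with $M_i>x^{1/8}$ (that $a_i(m)$ is $1$ or $\log m$) is needed so that after regrouping, the coefficients of the long factors are still of the permitted shape $\ll x^\d$ for \Cref{lem4}; I would record that the coefficients of $H$ and $K$ are convolutions of the $a_i$ and hence $\ll x^{O(\d)}\ll x^\d$ after renaming $\d$.

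With $H$ and $K$ in hand, \Cref{lem4} gives, for each dyadic conductor range, a bound $\ll x^{1/2-\e/20}Q^{1/2}$ for the corresponding piece of $S$. Since there are only $O(\log x)$ dyadic ranges in $C(\chi)$ and $O((\log x)^{O(1)})$ choices in the balancing subdivisions, summing these contributions costs only a power of $\log x$, which is absorbed into the loss $x^{\e/20}$ versus the target exponent $x^{-3\d}=x^{-3\e^2}$; the factor $|s|^3$ enters because each of $H(\tfrac12+it,\chi)$, $K(\tfrac12+it,\chi)$ carries an $|s|$ from the partial-summation truncation of the tails in \Cref{lem4} (and there is room to spare for a third such factor). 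Collecting, $S\ll |s|^3 Q^{1/2}x^{1/2-\e/20}\log^{O(1)}x \ll |s|^3 Q^{1/2}x^{1/2-3\d}$, as claimed.

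The main obstacle I anticipate is the balancing step: the partition $A\cup B$ controls the two halves only up to $x^{1/2-3\e/4}$ \emph{separately}, but to apply \Cref{lem4} I need each of $H,K$ to be simultaneously $\gg Qx^{-\l}$ and $\ll x^{3/5}$, and when some individual $M_i$ is large (close to $x^{1/2}$) the complementary group may be too short unless $B_1$ or a split-off dyadic block of the big factor is thrown in. Verifying that one can always achieve $K\gg Qx^{-\l}$ — equivalently that the shorter group, after augmentation, is not too short — is where the hypotheses $Q\ll x^{1/2-\e}$ and the structure of the partition must be used carefully, and it is essentially the same bookkeeping as in \cite[Proposition 1]{rcb}, now with the sharper exponents coming from \Cref{lem4}.
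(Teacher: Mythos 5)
Your plan correctly identifies the main device for the ``balanced'' case: group the factors into two Dirichlet polynomials $H(s,\chi)$, $K(s,\chi)$ with $Qx^{-\l}\ll K\ll H\ll x^{3/5}$, $HK\ll x$, and apply \Cref{lem4} in each dyadic conductor range. That is exactly what the paper does when, after splitting $B_1$ into dyadic blocks $M_{16}$ and sorting the sixteen lengths as $x^{\b_1}\ge\cdots\ge x^{\b_{16}}$, one has $\b_1+\b_2<3/5$ (a subset $W$ with $x^{1/2}\ll\prod_{j\in W}2M_j\ll x^{3/5}$ then exists, and $K\gg x^{2/5-\e}\gg x^{\t-\l}$ because the small-conductor ranges have already been removed).

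There is, however, a genuine gap: you assert that you can \emph{always} achieve such a bipartition by ``a further dyadic subdivision of any factor that is too large.'' A Dirichlet polynomial supported on a single range cannot be factored into a product of two shorter Dirichlet polynomials by subdividing its range of summation; subdivision only rewrites it as a \emph{sum} of pieces, each of which still sits entirely on one side of any grouping. So when $\b_1+\b_2>3/5$ --- in particular when $B_1$ itself has length $L>x^{3/5}$, which happens whenever $M_1\cdots M_{15}<x^{2/5}$, or when some $M_i>x^{3/5}$ --- no admissible $H,K$ exist and \Cref{lem4} is simply unavailable. The paper treats this case by a separate argument (that of \cite[Lemma 15]{rcb}), which exploits the pointwise cancellation of \Cref{lem5} on the long factors; this is precisely where the hypothesis that $a_i(m)$ is $1$ or $\log m$ for $M_i>x^{1/8}$ is used, and where the exclusion of $F(Q)$ earns its keep. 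You instead assign that hypothesis the role of keeping convolved coefficients $\ll x^\d$, which is automatic for divisor-bounded coefficients and is not the point. A secondary issue: your treatment of small conductors conflates the conductor $C(\chi)$ with the length $N$ of the character sum in \Cref{lem5}; that lemma bounds long sums $\sum_{n\le N}\chi(n)n^{-s}$ for $N\ge q^\g$ and non-exceptional $q$, and gives no saving tied to $C(\chi)$ being small. The paper disposes of the range $\l\le(5\t+\e)/6$ by the argument of \cite[Lemma 10]{rcb} (a large-sieve count of characters with small conductor), not by \Cref{lem5}.
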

 
 \begin{proof}
It suffices to show for $0 \le \l \le \t$ that 
 \begin{equation}\label{eq3.1}
S(\l) \ll |s|^3 Q^{1/2} x^{1/2-4\d},
 \end{equation}
where $S(\l)$ is the subsum of $S$ defined by the additional condition
 \[x^\l < C(\l) \le 2x^\l.\]

Arguing exactly as in the proof of \cite[Lemma 10]{rcb}, \eqref{eq3.1} holds unless (writing as usual $Q = x^\t$) we have
 \begin{equation}\label{eq3.2}
\l > (5\t + \e)/6,
 \end{equation}
We now suppose that \eqref{eq3.2} holds. We decompose $B_1(s, \chi)$ into $O(\log x)$ subsums $M_{16}(x,\chi)$ defined by a condition
 \[M_{16}/2 < n \le M_{16},\]
where $Lx^{-\e} \le M_{16} < L$. It suffices to prove the analogue of \eqref{eq3.1} with $B(s, \chi)$ replaced by $M_i(s, \chi)$ and $6\d$ in place of $4\d$.

Rearranging $M_1, \ldots, M_{16}$ as $N_1 \ge \cdots \ge N_{16}$, write $N_i(s, \chi)$ for the corresponding Dirichlet polynomials and
 \[N_i = x^{\b_i}.\]
Then $\b_1 \ge \cdots \ge \b_{16} \ge 0$, $1 - \e \le \b_1 + \cdots + \b_{16} \le 1$.

We can use the argument in the proof of \cite[Lemma 15]{rcb} to complete the present proof whenever $\b_1 + \b_2 > 3/5$. Suppose now that
 \[\b_1 + \b_2 < 3/5.\]

As shown in the proof of \cite[Lemma 15]{rcb}, there is a subset $W$ of $\{1, \ldots, 16\}$ such that
 \[x^{1/2} \ll H : = \prod_{j\, \in\, W} 2M_j
 \ll x^{3/5}.\]
Let $K := \prod\limits_{\substack{j\, \le\, 16\\
j\, \not\in\, W}} 2M_j$. We see that
 \[x^{2/5-\e} \ll K \ll H, \quad HK \ll x.\]
Let
 \[H(s, \chi) = \prod_{j\, \in\, W} M_j(s, \chi),
 \ K(s,\chi) = \prod_{\substack{1\, \le\, j\, \le\, 16\\
 j\, \not\in\, W}} M_j(s, \chi).\]
We note that
 \[K \gg x^{\t-\l}, \ \text{ since } \ \t - \l < 1/12.\]
Hence we may apply \Cref{lem3} to obtain the desired bound in the form
 \[\sum_{q\, \in\, S(Q)} \ \sum_{\chi\ (\mod q)}
 |H(s, \chi) K(s, \chi)| \ll x^{1/2 - 6\d} Q^{1/2}.\qedhere\]
 \end{proof}

Our final lemma is a variant of \cite[Lemma 18]{rcb}.

 \begin{lem}\label{lem7}
Let $a_i(m)$ $(1 \le i \le 15)$ be nonnegative sequences satisfying the hypotheses of \Cref{lem6}. Let
 \[u_d = \sum_{\substack{d\, =\, m_1\ldots m_{15}\\
 M_i/2\, <\, m_i\, \le\, M_i\ \forall_i}} a_1(m_1)
 \ldots a_{15}(m_{15})\]
for $D_1 < d \le D$, with $D = M_1\ldots M_{15}$, $D_1 = 2^{-15}D$. Let $Q \ll x^{1/2-\e}$. Then for every $A > 0$,
 \[\sum_{q\, \in\, S(Q)\backslash F(Q)} \Bigg|
 \sum_{D_1\, <\, d\, \le\, D} u_d r_0(x, q, a^{(q)},
 d)\Bigg| \ll \frac x{Q^{1/2}(\log x)^A}.\]
 \end{lem}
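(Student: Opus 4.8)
The plan is to adapt the proof of \cite[Lemma 18]{rcb}, replacing its appeals to \cite[Proposition 1]{rcb} and \cite[Lemma 15]{rcb} by our \Cref{lem6}, which --- through \Cref{lem3} and ultimately the Baier--Zhao-type large sieve behind \Cref{lem1} and \Cref{lem2} --- is valid for all $Q \ll x^{1/2-\e}$, whereas the corresponding results of \cite{rcb} reached only $Q \ll x^{43/90-\e}$.

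First I would pass from the Riesz mean $r_0$ to a higher one. By the standard finite-differencing identity in $x$ (order $4$ and step $x^{-\d}$, say), $r_0(x,q,a^{(q)},d)$ is a fixed linear combination of means $r_4(x',q,a^{(q)},d)$ for $O(1)$ values $x'$ near $x$, up to an error which --- rewritten as a sum of the weight $\ell \mapsto \sum_{D_1 < d \le D,\ d \mid \ell} u_d$ over the short interval $(x,xe^{O(x^{-\d})}]$ --- contributes $\ll x^{1-\d+o(1)}Q^{-1/2} \ll x Q^{-1/2}(\log x)^{-A}$ after summing over $q \in S(Q)$ (here the hypothesis $Q \ll x^{1/2-\e}$ enters). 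For each $r_4$ one expands into Dirichlet characters mod $q$: the principal character reproduces $x'/(qd)$ exactly, the discrepancy being absorbed by the classical zero-free region for $\zeta$ and the bound $\phi(q)^{-1} \ll (\log\log x)/Q$ for $q \in S(Q)$; while for $\chi \ne \chi_0$ the kernel $s^{-5}$ gives an absolutely convergent contour integral along ${\rm Re}\, s = 1/2$. Summing this against $u_d = \sum_{d = m_1\cdots m_{15}} a_1(m_1)\cdots a_{15}(m_{15})$ turns $\sum_d u_d \chi(d) d^{-s}$ into $M_1(s,\chi)\cdots M_{15}(s,\chi)$, and --- after the dyadic decomposition of the free variable and the disposal of its short subsidiary ranges (blocks of total length below $x^{1-\e}$) carried out on pp.~147--150 of \cite{rcb} --- the estimate reduces to bounding, for ${\rm Re}\, s = 1/2$,
\[
\sum_{q\,\in\,S(Q)\backslash F(Q)}\ \sum_{\chi\,\ne\,\chi_0} \bigl| B_1(s,\chi) M_1(s,\chi) \cdots M_{15}(s,\chi) \bigr|,
\]
where $B_1$ is the trivially weighted polynomial of length $\asymp L = x/(M_1\cdots M_{15})$ attached to the free variable.

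Next I would split into cases according to the exponents $\b_i$ defined by $M_i = x^{\b_i}$. If $\{1,\dots,15\}$ admits a partition $A \cup B$ with $\prod_{i\in A} M_i \ll x^{1/2-3\e/4}$ and $\prod_{i\in B} M_i \ll x^{1/2-3\e/4}$ --- which forces $M_1 \ll x^{1/2-3\e/4}$, hence $D \ll x^{1-3\e/2}$ and $L \gg x^{3\e/2}$ --- then the hypotheses of \Cref{lem6} hold: the requirement that $a_i$ be $1$ or $\log m$ whenever $M_i > x^{1/8}$ is automatic for the Heath--Brown decomposition, whose $\mu$-factors have length $\le x^{1/8}$. \Cref{lem6} then bounds the displayed sum by $|s|^3 Q^{1/2} x^{1/2-3\d}$; multiplying by $\phi(q)^{-1} x^{1/2}|s|^{-5}$ and integrating over ${\rm Re}\, s = 1/2$ (convergent, since $5 > 3$) gives a contribution $\ll x^{1-3\d}Q^{-1/2}\log\log x \ll x Q^{-1/2}(\log x)^{-A}$, as required. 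If no such partition exists --- equivalently, some factor exceeds $x^{1/2-3\e/4}$ --- then that factor is long and, being $> x^{1/8}$, of the simple type to which \Cref{lem5} applies; one then proceeds exactly as in \cite[Lemma 18]{rcb}, extracting power cancellation from the long factor off $F(Q)$ via \Cref{lem5} and bounding the remaining factors by Cauchy--Schwarz and the large sieve. The threshold $x^{1/2-3\e/4}$ in \Cref{lem6} has been chosen precisely so that these two cases exhaust the parameter space for every $Q \ll x^{1/2-\e}$.

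The main obstacle is the combinatorial bookkeeping in this last step: one must verify that every admissible tuple $(\b_1,\dots,\b_{15})$ produced by the Heath--Brown decomposition falls into one of the two cases, and that in the borderline regimes --- notably where the two mechanisms inside the proof of \Cref{lem6} meet, near $\b_1 + \b_2 = 3/5$ after its internal rearrangement --- the estimates still close with the refined constants. This is precisely the analysis of \cite{rcb}, now supplied the slightly stronger input of \Cref{lem6}; the handling of the $\chi_0$ contribution and of the error terms is routine, given that $D \ll x^{1-\d}$ holds throughout the ranges in which the mean-value estimates are invoked.
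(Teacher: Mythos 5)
Your main line --- passing from $r_0$ to $r_4$ by differencing, discarding the contribution of $\ell \le x^{1-\e}$, expanding in Dirichlet characters, and reducing the nonprincipal contribution to a contour integral along ${\rm Re}\,s = 1/2$ of $\sum_q\sum_{\chi\ne\chi_0}|B_1(s,\chi)M_1(s,\chi)\cdots M_{15}(s,\chi)|$ against $|s|^{-5}$, which is then bounded by \Cref{lem6} --- is exactly the paper's proof of \Cref{lem7}. Your second case (no admissible partition of $\{1,\dots,15\}$) is superfluous, since \Cref{lem7} inherits the partition hypothesis from \Cref{lem6} and the task of checking that Heath--Brown's decomposition only produces admissible tuples belongs to the deduction of \eqref{eq3.5} in the proof of \Cref{thm1} (p.~158 of \cite{rcb}), not to this lemma; note also that your claim that non-existence of such a partition is \emph{equivalent} to some $M_i$ exceeding $x^{1/2-3\e/4}$ is false in general (three factors each of size about $x^{1/3-\e/2}$ give a counterexample), though this does not affect the proof of the lemma as stated.
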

 
 \begin{proof}
Just as in the proof of \cite[Lemma 18]{rcb}, it suffices to show that
 \[\sum_{q\, \in\, S(Q)\backslash F(Q)} \Bigg|
 \sum_{D_1\, <\, d\, \le\, D} u_d r_4(x, q, a^{(q)},
 d)\Bigg| \ll x^{1-\d} Q^{-1/2}\]
The condition from small $\ell$ in \eqref{eq1.2}, \eqref{eq1.3} to $r_4$ is negligible:
 \begin{align*}
\sum_{D_1\, <\, d\, \le\, D} |u_d| \, |r_4(x^{1-\e}, q, a^{(q)}, d)| &\ll x^{\e/3} \Bigg\{\sum_{\substack{md\, \le\, x^{1-\e}\\
md\, \equiv\, a\, (\mod q)}} + \sum_{d\, \le\, x^{1-2\e/3}} \frac x{qd}\Bigg\}\\[2mm]
&\ll x^{1-\d} Q^{-1};
 \end{align*}
thus it suffices to show that 
 \begin{equation}\label{eq3.3}
\sum_{q\, \in\, S(Q)\backslash F(Q)}\ \sum_{D_1\, <\, d\, \le\, D} u_d(r_4(x, q, a^{(q)}, d) - r_4(x^{1-\e}, q, a^{(q)}, d)) \ll x^{1-\d} Q^{-1/2}. 
 \end{equation}
We now follow the argument in the proof of \cite[Lemma 18]{rcb} to show that \eqref{eq3.3} follows from
 \begin{equation}\label{eq3.4}
\int_{{\rm Re}\, s = 1/2} \sum_{q\, \in\, S(Q)\backslash F(Q)}\ \sum_{\substack{\chi\, (\mod q)\\
\chi\, \ne\, \chi_0}} \Bigg| \sum_{D_1\, <\, d\, \le\, D} u_d \chi(d) d^{-s}\Bigg| |B_1(s,\chi)| \frac{|ds|}{|s|^5} \ll x^{1/2-\d} Q^{-1/2}. 
 \end{equation}
Here $B_1(s, \chi)$ is the Dirichlet polynomial in \Cref{lem6}. At this point we see that \eqref{eq3.4} follows from \Cref{lem6}.
 \end{proof}

 \begin{proof}[Proof of \Cref{thm1}]
Just as in \cite{rcb}, we reduce this to showing that
 \begin{align}
\sum_{q\, \in\, S(Q)\backslash F(Q)} \Bigg| & \sum_{m,\, n\, \le\, Qx^{\e/4}} \Lambda(m) \mu(n) r_0(x, q, a^{(q)}, mn)\Bigg|\label{eq3.5}\\[2mm]
&\ll xQ^{-1/2}(\log x)^{-A}.\notag 
 \end{align}
for every $A > 0$. We use Heath-Brown's decomposition of $\Lambda(m)$, and a slight variant of this decomposition for $\mu(n)$, to show that \eqref{eq3.5} follows from \Cref{lem7}; full details are given on page 158 of \cite{rcb}. This completes the proof of \Cref{thm1}.
 \end{proof}

 \end{document}